\numberwithin{equation}{section}
\DeclareMathOperator{\Var}{Var}
\let \d \relax
\newcommand{\d}{\mathrm{d}}
\newcommand{\E}{\mathds{E}}
\newcommand{\unn}[2]{[\![#1,#2]\!]}
\renewcommand{\epsilon}{\varepsilon}
\renewcommand{\tilde}{\widetilde}
\def\bet{\begin{thm}}
\def\eet{\end{thm}}
\def\bel{\begin{lem}}
\def\eel{\end{lem}}
\def\bas{\begin{ass}}
\def\eas{\end{ass}}
\def\bec{\begin{cor}}
\def\eec{\end{cor}}
\def\bed{\begin{defn}}
\def\eed{\end{defn}}
\def\bep{\begin{prop}}
\def\eep{\end{prop}}
\def\beq{\begin{equation}}
\def\eeq{\end{equation}}
\def\bea{\begin{equation*}}
\def\eea{\end{equation*}}
\def\bex{\begin{ex}}
\def\eex{\end{ex}}
\def\bp{\begin{proof}}
\def\ep{\end{proof}}
\def\1{{\mathbbm 1}}
\def\benr{\begin{enumerate}[label=(\roman*)]}
\def\eenr{\end{enumerate}}
\def\R{\mathbb{R}}
\newcommand{\bma}{\begin{bmatrix}}
\newcommand{\ema}{\end{bmatrix}}
\def\phi{\varphi}
\renewcommand{\hat}{\widehat}
\newtheorem{ccounter}{ccounter}[section]
\newtheorem{thm}[ccounter]{Theorem}
\newtheorem{lem}[ccounter]{Lemma}
\newtheorem{cor}[ccounter]{Corollary}
\newtheorem{defn}[ccounter]{Definition}
\newtheorem{prop}[ccounter]{Proposition}
\newtheorem{ass}[ccounter]{Assumption}
\newtheorem{ex}[ccounter]{Example}
\theoremstyle{definition}
\titleformat{\paragraph}[runin]{\itshape\normalsize}{\theparagraph}{}{}
\titleformat{\subparagraph}[runin]{\itshape\normalsize}{\theparagraph}{0em}{}
\titleformat{\section}[block]{\normalfont\filcenter}{\Large\thesection .}{.7em}{\Large\scshape}
\titleformat{\subsection}[runin]{\normalfont}{\large \bf \thesubsection .}{.5em}{\large\bf}[.]
\titleformat{\subsubsection}[runin]{\normalfont}{\bf \thesubsubsection .}{.5em}{\bf}[.]
\begin{document}

\title{\textsc{\textbf{Eigenvalue distribution of the Hadamard product of sample covariance matrices in a quadratic regime}}}
\author{S. \textsc{Abou Assaly}\\\vspace{-0.15cm}\footnotesize{\it{Université de Montr\'eal}}\\\footnotesize{\it{sebastien.abou.assaly@umontreal.ca}}\and L. \textsc{Benigni}\\\vspace{-0.15cm}\footnotesize{\it{Université de Montréal}}\\\footnotesize{\it{lucas.benigni@umontreal.ca}}}

\date{}
\maketitle
\abstract{In this note, we prove that if $X\in\R^{n\times d}$ and $Y\in\R^{n\times p}$ are two independent matrices with i.i.d entries then the empirical spectral distribution of $\frac{1}{d}XX^\top \odot \frac{1}{p}YY^\top$, where $\odot$ denotes the Hadamard product, converges to the Marchenko--Pastur distribution of shape $\gamma$ in the quadratic regime of dimension $\frac{n}{dp}\to \gamma$ and $\frac{p}{d}\to a$.}
\section{Introduction and main results}\label{intro}

First studied by Wishart \cite{wishart1928generalised} for some problems in statistics, random matrix theory has then skyrocketed after the works of Wigner \cite{wigner} in mathematical physics and Marchenko and Pastur \cite{marchenko} in statistics which both considered high-dimensional problems. In the latter, the authors considered the empirical eigenvalue distribution of matrices of the form $\frac{1}{d}XX^\top$ where $X\in\R^{n\times d}$ is a matrix with i.i.d centered entries with unit variance (the paper actually concerns more general cases where the entries of $X$ have some covariance structure). They then show that, 
\[
\frac{1}{n}\sum_{i=1}^n \delta_{\lambda_i}\xrightarrow[n\to\infty]{}\mu_{\mathrm{MP}_\gamma} 
\]  
weakly in probability if $n$ and $d$ grows together linearly in the sense that $\frac{n}{d}\to \gamma$. If $\gamma\in(0,1]$, the Marchenko--Pastur distribution is given by 
\begin{equation}\label{eq:mp}
\mu_{\mathrm{MP}_\gamma}(\d x) = \frac{\sqrt{(b-x)(x-a)}}{2\pi\gamma x}\mathds{1}_{x\in[a,b]}\d x\quad\text{with }b=(1+\sqrt{\gamma})^2\quad\text{and }a = (1-\sqrt{\gamma})^2.
\end{equation}
If $\gamma>1$, since the eigenvalues of $XX^\top$ and $X^\top X$ are the same up to a mass at 0, it is enough to add $(1-\frac{1}{\gamma})\delta_0$ to the measure. This result has been generalized to many different models and different assumptions on the entries of $X$, see \cite{baisilver} for a book on the subject.

Recently, paradigms in machine learning have introduced new models of random matrix theory involving different structures or different scaling of dimensions, see \cite{rmmlbook} for a book on the subject. For instance, the analysis of large neural networks have developed further the study of matrices where a nonlinearity is applied entrywise \cites{louartliaocouillet, pennington2017nonlinear, benignipeche1, fan2020spectra, piccolo2021analysis, wangzhu,dabo2024traffic}, models which were first introduced through inner-product kernel matrices \cites{elkaroui, chengsinger}. Beside nonlinear random matrices, other forms of structures have risen. The Neural Tangent Kernel corresponds to the covariance matrix of the Jacobian of the output of a neural network during training \cites{jacot2018neural,chizat2019lazy} and can be written, in the simpler case of a two-layer network, in the form 
\begin{equation}\label{eq:NTK}
\mathrm{NTK} = \frac{1}{d}XX^\top \odot \frac{1}{p}\sigma'(XW)D^2\sigma'(XW)^\top +\frac{1}{p}\sigma(XW)\sigma(XW)^\top
\end{equation}
where $\sigma:\R\to\R$, $X\in\R^{n\times d},$ $W\in\R^{d\times p}$, $D\in\R^{p\times p}$ represents respectively the activation function, the matrix of data, the matrix of weights of the hidden layer at initialization, the diagonal matrix of the output layer vector at initialization, and $\odot$ is the Hadamard or entrywise product $(A\odot B)_{ij}=A_{ij}B_{ij}$. The spectrum of this matrix has been studied in \cites{fan2020spectra, wangzhu} in a regime of dimension where the first part collapses to the identity matrix. The regime where this part becomes nontrivial is given by $n\asymp dp$ and has been studied in \cite{benignipaquette}. We note that such polynomial scalings are becoming important for these models as training data samples are becoming huge and has been studied in numerous recent works \cites{misiakiewicz2022spectrum, lu2022equivalence, dubova2023universality, hu2024asymptotics, montanarizhong, pandit2024universality, misiakiewicz2023six}.

The literature on the Hadamard product of random matrices is sparse and this sparks the subject of the current paper. We consider a simpler model than \eqref{eq:NTK} where the two matrices being multiplied entrywise are independent and are both null sample covariance matrices.  
We consider the distribution of the eigenvalues of 
\begin{equation}\label{eq:defM}
M = \frac{1}{d} XX^\top \odot \frac{1}{p} YY^\top
\end{equation}
 with $\odot$ being the Hadamard product, $X\in\R^{n\times d}$ with i.i.d entries, and $Y\in\R^{n\times p}$ with i.i.d entries independent of $X$. We give the following assumptions on the distribution of $X$ and $Y$
\begin{ass}\label{ass1}
We have that there exists $C>0$, such that for all $i\in\unn{1}{n},$ $j\in\unn{1}{d}$, and $\ell\in\unn{1}{p}$
\begin{itemize}
    \item $\E\left[X_{ij}\right]=0$,\, $\E\left[X_{ij}^2\right]=\sigma_x^2$, and $\E\left[X_{ij}^4\right]\leqslant C$
    \item $\E\left[Y_{i\ell}\right]=0$,\, $\E\left[Y_{i\ell}^2\right]=\sigma_y^2$, and $\E\left[Y_{i\ell}^4\right]\leqslant C$
\end{itemize}
\end{ass}
These assumptions are quite general. However, we do not know whether the existence of a fourth moment is necessary for the convergence of the empirical eigenvalue distribution to the Marchenko--Pastur distribution. We have the following assumptions on the scaling of dimensions
\begin{ass}\label{ass2}
    We suppose that
    \[
    \frac{n}{pd}\xrightarrow[n\to\infty]{}\gamma >0,\quad \frac{p}{d}\xrightarrow[n\to\infty]{}a>0. 
    \]
\end{ass}
Our main result is given by the following 
\begin{thm}\label{theo}
    Under Assumptions \ref{ass1} and \ref{ass2}, the empirical eigenvalue distribution $\mu_n$ of $\frac{1}{\sigma_x^2\sigma_y^2}M$ defined in \eqref{eq:defM} converges weakly in probability to the Marchenko–Pastur distribution of shape $\gamma$ defined in \eqref{eq:mp}. 
    
    Additionally, if we assume that all moments of $X$ and $Y$ are finite, we obtain almost sure weak convergence of the empirical eigenvalue distribution.
    \end{thm}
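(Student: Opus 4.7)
The key observation is to rewrite $M = \frac{1}{dp}ZZ^\top$ where $Z\in \R^{n \times dp}$ has entries $Z_{i,(k,\ell)} = X_{ik}Y_{i\ell}$. This is a sample-covariance-type matrix with independent rows $z_i = X_i\otimes Y_i$ and isotropic covariance $\E[z_i z_i^\top]=\sigma_x^2\sigma_y^2\, I_{dp}$, which heuristically explains the appearance of MP$_\gamma$ with $\gamma = \lim n/(dp)$. However, entries within a row are not independent, so classical MP theorems do not apply off the shelf. I would proceed by the method of moments with Carleman's condition, together with a standard truncation reduction to a bounded-entries case (necessary because the moment method requires all moments of $X,Y$ to be finite, whereas Assumption~\ref{ass1} only provides four).

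After truncation, expanding the trace and using the independence of $X$ and $Y$ yields the key factorization
\[
\E \tr M^k = \frac{1}{(dp)^k} \sum_{\vec i \in [n]^k} \Bigl(\sum_{\vec k \in [d]^k} \E\prod_{s=1}^{k} X_{i_s k_s} X_{i_{s+1}k_s}\Bigr) \Bigl(\sum_{\vec\ell \in [p]^k} \E\prod_{s=1}^{k} Y_{i_s \ell_s} Y_{i_{s+1}\ell_s}\Bigr),
\]
with $i_{k+1}=i_1$. For each fixed $\vec i$, the two inner sums are exactly the walk sums appearing in the classical MP proofs for $\tfrac{1}{d}XX^\top$ and $\tfrac{1}{p}YY^\top$ restricted to walks whose projection onto $[n]$ equals $\vec i$. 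Standard tree-walk combinatorics then controls each inner sum: if $\vec i$ visits $s$ distinct indices and corresponds to the projection of a bipartite tree walk, the leading contribution uses $k+1-s$ distinct $k$-values (respectively $\ell$-values) and has magnitudes $\sim d^{k+1-s}\sigma_x^{2k}$ and $\sim p^{k+1-s}\sigma_y^{2k}$. Combining with the $\sim n^s$ count of compatible $\vec i$, the overall prefactor $\frac{1}{n(\sigma_x^2\sigma_y^2)^k(dp)^{k}}$ yields the factor $(n/(dp))^{s-1}\to \gamma^{s-1}$, and the combinatorial weights match the Narayana-type coefficients of the MP moments, recovering $m_k = \int x^k\,d\mu_{\mathrm{MP}_\gamma}(x)$ for every $k$.

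For convergence in probability I would bound $\Var(\tr M^k) = o(n^2)$ by expanding $\E(\tr M^k)^2$ as a double walk sum; after the same $X$/$Y$ factorization, the leading diagrams from pairs of disjoint walks reassemble into $(\E\tr M^k)^2$, while the corrections come from walks sharing at least one $i$-index and therefore lose a factor of $n$ relative to the leading term. The truncation error on the empirical spectral distribution is then controlled via a Hoffman--Wielandt type rank-perturbation bound as in \cite{baisilver}. For the almost sure statement under the all-moments hypothesis, no truncation is needed and the variance estimate upgrades to higher-moment estimates via cumulant walk expansions, yielding summability and the Borel--Cantelli conclusion.

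The main obstacle I anticipate is the combinatorial bookkeeping in the tree-walk enumeration. Because the $X$- and $Y$-walks share the $\vec i$-sequence but are otherwise independent, one must verify that, after multiplying the two inner sums, the resulting count factors into precisely the same non-crossing pair-partition structure as in classical MP, rather than producing spurious extra contributions from configurations that form a tree walk for $X$ but not for $Y$ (or vice versa). Uniform control of the sub-leading terms in both $d$ and $p$ is also required, since Assumption~\ref{ass2} allows the ratio $p/d$ to converge to an arbitrary positive constant $a$ and the two scales must be treated on the same footing.
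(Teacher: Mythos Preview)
Your proposal is correct and follows essentially the same route as the paper: the same $X$/$Y$ factorization of the walk sums, the same tree-walk combinatorics (the ``main obstacle'' you flag is exactly what the paper resolves by arguing that the $Y$-walk must mirror the shape of the $X$-walk), the same variance expansion over pairs of walks, and the same truncation-and-centering reduction via a Hoffman--Wielandt/L\'evy-distance bound from \cite{baisilver}. One minor simplification: the paper obtains $\Var\bigl(\int x^k\,d\mu_n\bigr)=O(n^{-2})$, which is already summable, so Borel--Cantelli gives the almost sure statement directly and no upgrade to higher cumulants is needed.
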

This result is illustrated in Figure \ref{fig:fig1}.
\begin{figure}[!ht]
    \centering
    \includegraphics[width=.5\linewidth]{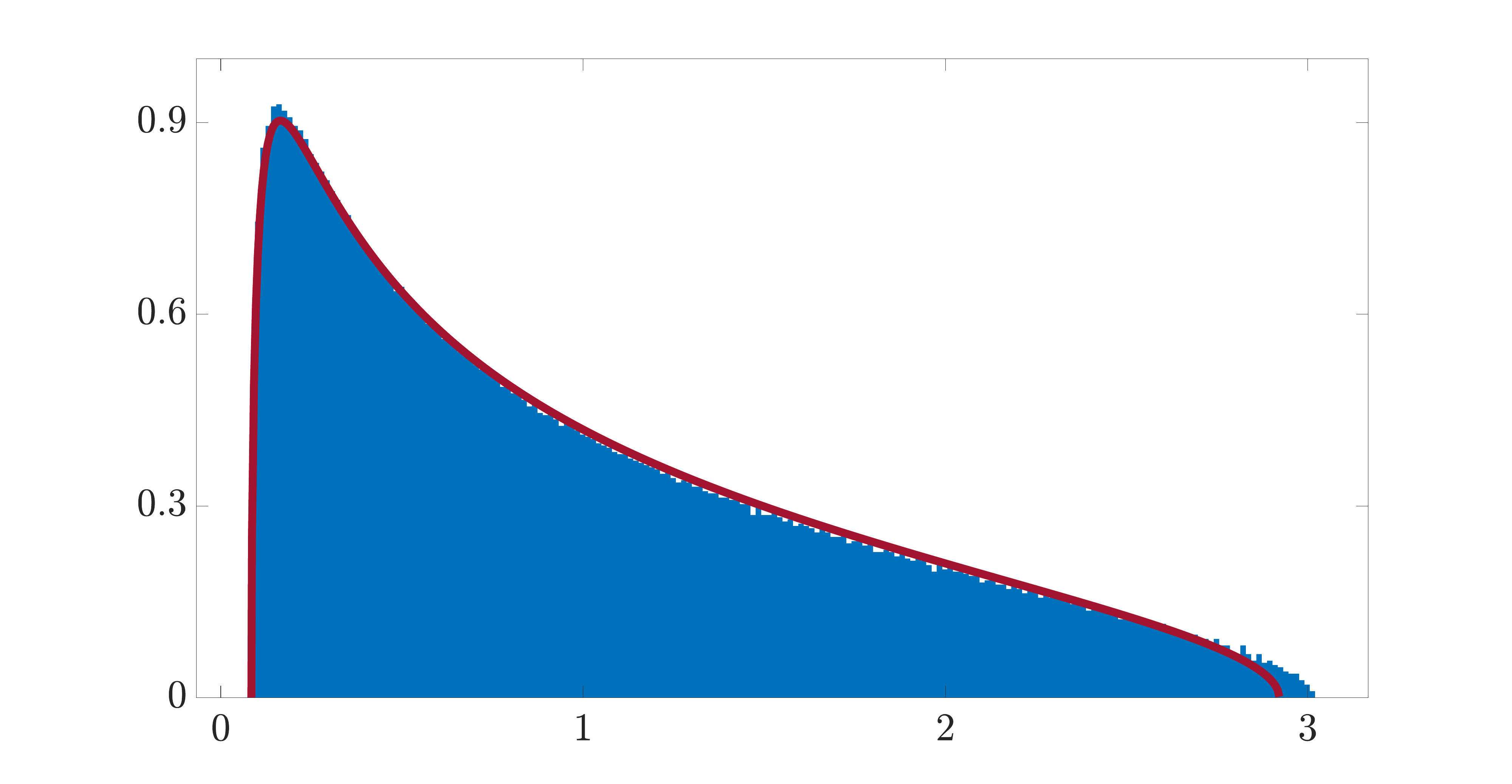}
    \caption{Histogram of eigenvalues of $M$ for $n=20000$, $p=282$, $d=141$ with the curve of the Marchenko--Pastur distribution.}\label{fig:fig1}
\end{figure}
\section{Proof of the main result}

The proof is based on the method of moments. We start by obtaining the limiting expected moments in Proposition \ref{prop1} and then prove concentration of such moments in Proposition \ref{prop2}. This will be proved assuming that all moments of $X$ and $Y$ are finite. To come back to Assumption \ref{ass1}, we use the truncation and centralisation technique in Proposition \ref{prop3}. We note that other proofs are possible notably with the Stieltjes transform of the empirical eigenvalue distribution. This can be done using the result from \cite{baizhou} which consists in checking concentration of quadratic forms.

\begin{prop}\label{prop1}

Suppose that we have the assumptions made in Theorem \ref{theo} and that for all $k\in\mathbb{N}$, $\E \left[\vert X_{ij}\vert^k\right]+\E\left[\vert Y_{i\ell}\vert^k\right] < +\infty$ for $i\in\unn{1}{n},$ $j\in\unn{1}{d}$, and $\ell\in\unn{1}{p}$. Then,

    \[\E \left[ \int x^{k}\, d\mu_n \right] \xrightarrow[n\to\infty]{} \sum_{s=0}^{k-1}  \frac{\gamma^s}{s+1} {\binom{k-1}{s}} {\binom{k}{s}}\]

\end{prop}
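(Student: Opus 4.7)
My approach is the method of moments applied to the normalised moment $\frac{1}{n}\E[\Tr(M/(\sigma_x^2\sigma_y^2))^k]$. Expanding the trace and using the independence of $X$ and $Y$,
\begin{equation*}
\E[\Tr M^k]=\frac{1}{(dp)^k}\sum_{i_1,\ldots,i_k=1}^n\sum_{j_1,\ldots,j_k=1}^d\sum_{\ell_1,\ldots,\ell_k=1}^p\E\!\left[\prod_{r=1}^k X_{i_rj_r}X_{i_{r+1}j_r}\right]\E\!\left[\prod_{r=1}^k Y_{i_r\ell_r}Y_{i_{r+1}\ell_r}\right]
\end{equation*}
with the cyclic convention $i_{k+1}=i_1$. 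To each multi-index I associate two bipartite multigraphs, $G_X$ on vertex classes $I\cup J$ (the distinct $i$- and $j$-values) with edges $\{i_r,j_r\}$ and $\{j_r,i_{r+1}\}$ for $r=1,\ldots,k$, and $G_Y$ built analogously on $I\cup L$. Because the entries are centred and independent, each of the two expectations vanishes unless every edge of the corresponding graph has even multiplicity.

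Let $V_I,V_J,V_L$ denote the cardinalities of $I,J,L$. A given pattern of equal-index coincidences is realised by $\sim n^{V_I}d^{V_J}p^{V_L}$ choices; after dividing by $n(dp)^k(\sigma_x^2\sigma_y^2)^k$ and invoking Assumption \ref{ass2}, the normalised contribution is of order $\gamma^{V_I-1}d^{V_I+V_J-k-1}p^{V_I+V_L-k-1}$ times the expectation factor. Combined with the even-multiplicity constraints $V_I+V_J\le k+1$ and $V_I+V_L\le k+1$, only patterns saturating both inequalities survive in the limit; these correspond to both $G_X$ and $G_Y$ being trees with every edge of multiplicity exactly $2$, in which case the expectations reduce to $\sigma_x^{2k}\sigma_y^{2k}$ and cancel the normalisation. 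Writing $V_I=s+1$ with $s\in\{0,\ldots,k-1\}$ (so $V_J=V_L=k-s$), each surviving pattern carries weight $\gamma^s$ in the limit.

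The remaining task, and the main combinatorial obstacle, is to count these surviving patterns. A naive guess would produce $N(k,s+1)^2$ from an independent choice of $X$- and $Y$-trees, but the constraint that both trees have depth-first walks with a common $i$-sequence collapses this. The key observation is that a dominant $X$-pattern is encoded by a rooted planar bipartite tree with $s+1$ blue ($i$-) and $k-s$ red ($j$-) vertices, of which there are exactly the Narayana number $N(k,s+1)=\tfrac{1}{s+1}\binom{k-1}{s}\binom{k}{s}$, and such a tree is uniquely reconstructed from the $i$-subsequence of its walk. Consequently the $j$-pattern is determined by the $i$-pattern, and by the symmetric argument for $G_Y$ so is the $\ell$-pattern. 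Hence the count of valid $(i,j,\ell)$-patterns equals the number of valid $i$-patterns, namely $N(k,s+1)$. Summing $\gamma^s N(k,s+1)$ for $s=0,\ldots,k-1$ produces the claimed limit, with the subleading configurations ($V_I+V_J<k+1$ or $V_I+V_L<k+1$) easily seen to vanish from the explicit powers of $d$ and $p$.
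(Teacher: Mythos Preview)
Your proposal is correct and follows essentially the same route as the paper: expand the trace, encode each term by a pair of bipartite closed walks sharing their $i$-vertices, use centring to force every edge to appear at least twice, use the vertex count versus the normalisation to isolate the double-tree configurations with $V_I+V_J=V_I+V_L=k+1$, and then invoke the Narayana numbers. The one point where the presentations differ is the justification of the ``collapse'': the paper argues by contradiction (via an illustrative example) that the $W_y$-shape must mirror the $W_x$-shape, whereas you phrase the same fact as ``the tree is uniquely reconstructed from the $i$-subsequence of its depth-first walk'', which is the cleaner formulation of the same non-crossing--partition bijection; both arguments are at a comparable (sketch) level of rigor, and your upfront factorisation $\E[X_{IM}Y_{IJ}]=\E[X_{IM}]\E[Y_{IJ}]$ is a harmless cosmetic simplification.
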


\begin{proof}
We start by calculating
\[
\mu_n^k \coloneqq \E \left[ \int x^{k}\, d\mu_n \right]
=
\E \left[ \int x^{k} \cdot \frac{1}{n} \sum_{i=1}^{n} \delta_{\lambda_i}\, dx \right] 
= 
\frac{1}{n} \E \left[ \sum_{i=1}^{n} \int x^{k} \delta_{\lambda_i}\, dx \right]
= 
\frac{1}{n} \E \left[ \sum_{i=1}^{n} \lambda_i^{k} \right].
\]
In addition, by the definition of the trace and the following property $\text{Tr}(AB)=\text{Tr}(BA)$, we have:
\[
\mu_n^k
= 
\frac{1}{n} \E \left[ \text{Tr}(M^k) \right]
= \frac{1}{n} \E \left[ \sum_{1 \leq i_1, \ldots, i_k \leq n} M_{i_1 i_2} M_{i_2 i_3} \ldots M_{i_k i_1} \right]
\]
using the definition of $M$,
\begin{align*}
\mu_n^k
=
\frac{1}{n}  \sum_{1 \leq i_1, \ldots, i_k \leq n}\! \E \left[ \frac{1}{dp} \left[ XX^\top \right]_{i_1 i_2} \left[ YY^\top \right]_{i_1 i_2}  \frac{1}{dp} \left[ XX^\top \right]_{i_2 i_3} \left[ YY^\top \right]_{i_2 i_3} \ldots \frac{1}{dp} \left[ XX^\top \right]_{i_k i_1} \left[ YY^\top \right]_{i_k i_1} \right] 
\\
=
\frac{1}{n d^k p^k}  \sum_{\substack{1 \leq i_1, ..., i_k \leq n \\ 1 \leq m_1, ..., m_k \leq d \\ 1 \leq j_1, ..., j_k \leq p}} \E \left[X_{i_1 m_1} X_{i_2 m_1} X_{i_2 m_2} X_{i_3 m_2} \ldots X_{i_k m_k} X_{i_1 m_k} Y_{i_1 j_1} Y_{i_2 j_1} Y_{i_2 j_2} Y_{i_3 j_2} \ldots Y_{i_k j_k} Y_{i_1 j_k} \right].
\end{align*}
We now introduce the notations
\[
I \vcentcolon = (i_1, i_2, \ldots, i_k),\quad M \vcentcolon = (m_1, m_2, \ldots, m_k), \quad J \vcentcolon = (j_1, j_2, \ldots, j_k)
\]
and 
\[
X_{IM} \vcentcolon = X_{i_1 m_1} X_{i_2 m_1} \ldots X_{i_k m_k} X_{i_1 m_k},\quad  Y_{IJ} \vcentcolon = Y_{i_1 j_1} Y_{i_2 j_1} \ldots Y_{i_k j_k} Y_{i_1 j_k}.
\]
We can then write
\[
\mu_n^k
= 
\frac{1}{n d^k p^k} \sum_{I \in \unn{1}{n}^k}\sum_{M \in \unn{1}{d}^k}\sum_{J \in \unn{1}{p}^k} \E \left[ X_{IM} Y_{IJ} \right]
\]

Suppose that $W_x \vcentcolon = (i_1, m_1, i_2, m_2, \ldots, i_k, m_k, i_1)$ is the path made by the vertices $i_s$ and $m_s$ and $W_y \vcentcolon = (i_1, j_1, i_2, j_2, \ldots, i_k, j_k, i_1)$ is the path created by the vertices $i_s$ and $j_s$ for $s\in\unn{1}{k}$. We denote $\mathcal{G}$ the set of all the graphs $G$ with 4$k$ steps that have the same shape as the paths $\left( W_x, W_y \right)$. This is illustrated in Figure \ref{fig:fig2}. We also define $\Pi(G)$ as the expectation of $X_{IM} Y_{IJ}$ knowing the shape of the graph $G$, hence we can write,
\[
\mu_n^k
= 
\frac{1}{n d^k p^k} \sum_{G \in \mathcal{G}} \Pi(G) \cdot \# \{(W_x,W_y): G \text{ has the shape of } (W_x,W_y)\}.
\]

\begin{figure}[!ht]
\centering
\begin{tikzpicture}[node distance = 80 ,main/.style = {draw, circle, minimum size=12,}]

\node[main, label = left:{$i_1$},] (1) at (0,0) {};
\node[main, label = below:{$m_1 = m_5$}, fill = RoyalBlue] (2) at (1.25, -1.25) {};
\node[main, label = above:{$j_1 = j_5$}, fill = BrickRed] (3) at (1.25, 1.25) {};
\node[main, label = {[label distance=.2em]left:{$i_2 = i_5$}}] (4) at (2.5, 0) {};
\node[main, label = below:{$m_2 = m_4$}, fill = RoyalBlue] (5) at (3.75, -1.25) {};
\node[main, label = above:{$j_2 = j_4$}, fill = BrickRed] (6) at (3.75, 1.25) {};
\node[main, label = {[label distance=.2em]left:{$i_3 = i_4$}}] (7) at (5, 0) {};
\node[main, label = below:{$m_3$}, fill = RoyalBlue] (8) at (6.25, -1.25) {};
\node[main, label = above:{$j_3$}, fill = BrickRed] (9) at (6.25, 1.25) {};

\draw[->, RoyalBlue, line width = .1em] (1.-25) -- (2.115);
\draw[->, RoyalBlue, line width = .1em] (2.155) -- (1.-65);
\draw[->, RoyalBlue, line width = .1em] (2.65) -- (4.205);
\draw[->, RoyalBlue, line width = .1em] (4.245) -- (2.25);
\draw[->, RoyalBlue, line width = .1em] (4.-25) -- (5.115);
\draw[->, RoyalBlue, line width = .1em] (5.155) -- (4.-65);
\draw[->, RoyalBlue, line width = .1em] (5.65) -- (7.205);
\draw[->, RoyalBlue, line width = .1em] (7.245) --  (5.25);
\draw[->, RoyalBlue, line width = .1em] (7.-25) -- (8.115);
\draw[->, RoyalBlue, line width = .1em] (8.155) -- (7.-65);

\draw[->, BrickRed, line width = .1em] (1.65) -- (3.-155);
\draw[->, BrickRed, line width = .1em] (3.-115) -- (1.25);
\draw[->, BrickRed, line width = .1em] (3.-25) -- (4.115);
\draw[->, BrickRed, line width = .1em] (4.155) -- (3.-65);
\draw[->, BrickRed, line width = .1em] (4.65) -- (6.-155);
\draw[->, BrickRed, line width = .1em] (6.-115) -- (4.25);
\draw[->, BrickRed, line width = .1em] (6.-25) -- (7.115);
\draw[->, BrickRed, line width = .1em] (7.155) -- (6.-65);
\draw[->, BrickRed, line width = .1em] (7.65) -- (9.-155);
\draw[->, BrickRed, line width = .1em] (9.-115) -- (7.25);

\end{tikzpicture}
\caption{Example of a graph where $W_x = (i_1,m_1,i_2,m_2,i_3,m_3,i_3,m_2,i_2,m_1,i_1)$ and the other path is given by $W_y = (i_1,j_1,i_2,j_2,i_3,j_3,i_3,j_2,i_2,j_1,i_1).$}\label{fig:fig2}
\end{figure}
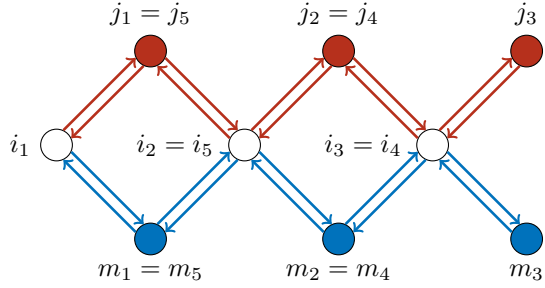

First of, we notice that each edge has to be crossed at least twice, we suppose the opposite, which means that there exist an edge $i_s m_s$ (without loss of generality), such that it is crossed only once, hence we get the following, by independence and centering of the entries of $X$,
\[
\Pi(G) 
= 
\E \left[ X_{IM} Y_{IJ} \right] 
= 
\E \left[ X_{i_s m_s} \right] \cdot \E \left[ X_{i_1 m_1} X_{i_2 m_1} \ldots X_{i_s m_{s-1}} X_{i_{s+1} m_s} \ldots X_{i_k m_k} X_{i_1 m_k} Y_{IJ} \right] 
= 
0.
\]

In addition, the graph made by $W_x$ has $2k$ steps, which implies a maximum of $k$ edges. Plus, it is a connected graph, so we have $\# \text{vertices} \leq \# \text{edges} + 1.$ Actually, each edge has to be crossed exactly 2 times. If not, we would have a maximum of $k - 1$ edges for the path $W_x$ so a maximum of $k$ vertices. Suppose that $\mathcal{G'} \vcentcolon = \{ G \in \mathcal{G}:$ each edge is crossed at least 2 times and that there exist at least 1 edge that is crossed more than 2 times$\}$, suppose also that $s + 1 = \#$ vertices $i$ so that we have a maximum of $k-s-1$ vertices $m$ and $k-s-1$ vertices $j$, then:
\begin{align*}
\frac{1}{n d^k p^k}& \sum_{G \in \mathcal{G'}} \Pi(G) \cdot \# \{(W_x,W_y): G \text{ has the shape of } (W_x,W_y)\} 
\\&
\leq \frac{1}{n d^k p^k} \sum_{G \in \mathcal{G'}} \max \{ \Pi(G): G \in \mathcal{G'} \} \cdot \# \{(W_x,W_y): G \text{ has the shape of } (W_x,W_y)\}\\
&\leq
\frac{1}{n d^k p^k} \cdot \max \{ \Pi(G): G \in \mathcal{G'} \} \sum_{G \in \mathcal{G'}} n^{s+1} d^{k-s-1} p^{k-s-1} 
\\&=
\frac{1}{n} \cdot \frac{n^{s+1} d^{k-s-1} p^{k-s-1}}{d^k p^k} \cdot \max \{ \Pi(G): G \in \mathcal{G'} \} \sum_{G \in \mathcal{G'}} 1 
\leq 
\frac{1}{n} \cdot \frac{n^{s+1}}{d^{s+1} p^{s+1}} \cdot C \xrightarrow[n \shortrightarrow \infty]{} 0 \cdot \gamma^{s+1} \cdot C 
= 
0 
\end{align*}
\noindent with $C = \max \{ \Pi(G): G \in \mathcal{G'} \} \sum_{G \in \mathcal{G'}} 1$. Note that we have $C < \infty$ since $\mathcal{G'}$ depends only on $k$ and because of the hypothesis $\E [X_{ij}^k] < \infty, \E [Y_{i\ell}^k] < \infty$. Thus, to get a non-vanishing contribution to the moment, each edge must be crossed exactly 2 times.

Moreover, we notice that the shape of the path made by $W_y$ is a symmetry of the shape of the path made by $W_x$, hence we have $\# \text{vertices } j = \# \text{vertices } m$. To see this, first see that the two paths completely share the $i$ vertices so that there cannot be any discrepancies on this part of the path. Now, at any point of the path made by $W_x$ or $W_y$, any vertex in the graph formed by $W_x$ or $W_y$ has a maximum of 2 neighboring edges such that those edges have been crossed once, if not we would have a cycle which contradicts the fact that $W_x$ and $W_y$ are both a tree. To prove that the path made by $W_y$ is a symmetry of the shape of the path made by $W_x$, we do it by contradiction, we suppose that they are not a symmetry and illustrate this through an example. We start by having the following graph.

\begin{center}

\begin{tikzpicture}[node distance = 80 ,main/.style = {draw, circle, minimum size=12,}]

\node[main, label = left:{$i_1$},] (1) at (0,0) {};
\node[main, label = below:{$m_1$}, fill = RoyalBlue] (2) at (1.25, -1.25) {};
\node[main, label = above:{$j_1$}, fill = BrickRed] (3) at (1.25, 1.25) {};
\node[main, label = left:{$i_2$}] (4) at (2.5, 0) {};
%\node[main, label = above:{$j_2$}, fill = BrickRed] (5) at (3.75, 1.25) {};
%\node[main, label = above:{$m_2$}, fill = RoyalBlue] (6) at (3.75, -1.25) {};

\draw[->, RoyalBlue, line width = .1em] (1) -- (2);
\draw[->, RoyalBlue, line width = .1em] (2) -- (4);
%\draw[->, RoyalBlue, line width = .1em] (4.-25) -- (6.115);
%\draw[->, RoyalBlue, line width = .1em] (6.155) -- (4.-65);

\draw[->, BrickRed, line width = .1em] (1) -- (3);
\draw[->, BrickRed, line width = .1em] (3) -- (4);
%\draw[->, BrickRed, line width = .1em] (4.65) -- (5.205);
%\draw[->, BrickRed, line width = .1em] (5.245) -- (4.25);

\end{tikzpicture}

\end{center}

Since we suppose that $W_x$ and $W_y$ are not the same path, we see that $W_x$ will, along the path, consider a new vertex (here $m_2$ on the figure) but not $W_y$ or vice versa. This gives the graph

\begin{center}

\begin{tikzpicture}[node distance = 80 ,main/.style = {draw, circle, minimum size=12,}]

\node[main, label = left:{$i_1$},] (1) at (0,0) {};
\node[main, label = below:{$m_1$}, fill = RoyalBlue] (2) at (1.25, -1.25) {};
\node[main, label = above:{$j_1=j_2$}, fill = BrickRed] (3) at (1.25, 1.25) {};
\node[main, label = left:{$i_2$}] (4) at (2.5, 0) {};
%\node[main, label = above:{$j_2$}, fill = BrickRed] (5) at (3.75, 1.25) {};
%\node[main, label = below:{$m_2$}, fill = RoyalBlue] (6) at (3.75, -1.25) {};
\node[main, label = below:{$m_2$}, fill = RoyalBlue] (7) at (3.75, -1.25) {};

\draw[->, RoyalBlue, line width = .1em] (1) -- (2);
\draw[->, RoyalBlue, line width = .1em] (2) -- (4);
%\draw[->, RoyalBlue, line width = .1em] (4.-25) -- (6.115);
%\draw[->, RoyalBlue, line width = .1em] (6.155) -- (4.-65);
\draw[->, RoyalBlue, line width = .1em] (4.-45) -- (7.135);

\draw[->, BrickRed, line width = .1em] (1) -- (3);
\draw[->, BrickRed, line width = .1em] (3.-25) -- (4.115);
%\draw[->, BrickRed, line width = .1em] (4.65) -- (5.205);
%\draw[->, BrickRed, line width = .1em] (5.245) -- (4.25);
\draw[->, BrickRed, line width = .1em] (4.155) -- (3.-65);

\end{tikzpicture}

\end{center}

Notice that $i_2$ in the path $W_x$ has 2 neighboring edges such that those edges have been crossed once, in our case it's $i_2 m_1$ and $i_2 m_2$, but in $W_y$'s case, there is none. So when the path $W_x$ go back to $i_2$ (which will happen since $i_2 m_1$ and $i_2 m_2$ have to be crossed twice), the only way for $W_y$ to do that is through vertex $j_1$ or $j_2$ since they are the only connection to $i_2$, hence an edge will be crossed at least 3 times, which is a contradiction with the fact that each edge must be crossed twice.

The problem consists now in counting double trees which is exactly the same counting problem as the moment of Marchenko--Pastur \cite{baisilver}*{Section 3.1.1}. Remember that the path made by $W_y$ is the same path with the number of vertices $j$ equal to the number of the vertices $m$. Since each edge has been crossed exactly 2 times then the path made by $W_x$ will have $k$ edges and $k+1$ vertices. Suppose that $s+1 = \#$ vertices $i$ and $k-s = \#$ vertices $m$ = $\#$ vertices $j$. Therefore we get the following:
\begin{align*}
\mu_n^k
&= 
\frac{1}{n d^k p^k} \sum_{G \in \mathcal{G}} \Pi(G) \cdot \# \{(W_x,W_y): G\text{ has the shape of } (W_x,W_y)\}
\\&=
\sum_{s=0}^{k-1} \frac{n(n-1) \ldots (n-s) d(d-1) \ldots (d-k+s+1) p(p-1) \ldots (p-k+s+1)}{n d^k p^k} \frac{\sigma_x^{2k} \sigma_y^{2k}}{s+1} {\binom{k-1}{s}} {\binom{k}{s}}
\\& \xrightarrow[n \shortrightarrow \infty]{} 
\sum_{s=0}^{k-1} \gamma^s \frac{ \left( \sigma_x \sigma_y \right)^{2k}}{s+1} {\binom{k-1}{s}} {\binom{k}{s}}.
\end{align*}

\end{proof}

\begin{prop}\label{prop2}

Suppose that we have the same assumptions of Proposition \ref{prop1}, then

\[
\mathrm{Var} \left[ \int x^{k}\, d\mu_n \right] = O\left(\frac{1}{n^2}\right)
\]

\end{prop}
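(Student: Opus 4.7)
The plan is to mirror the moment expansion of Proposition \ref{prop1} at the level of the second moment. Writing
\[
\mathrm{Var}\left[\int x^k\, d\mu_n\right] = \frac{1}{n^2}\Big(\E[\Tr(M^k)^2] - \E[\Tr(M^k)]^2\Big),
\]
it suffices to show the bracketed quantity is $O(1)$. I would expand $\Tr(M^k)^2$ as a double sum over pairs of closed walks $(W_x^{(1)},W_y^{(1)})$ and $(W_x^{(2)},W_y^{(2)})$, indexed by two independent copies of $(I,M,J)$, each contributing a joint expectation $\Pi(G_1 \cup G_2)$ on the combined multigraph of the two pairs of walks. The analogous expansion of $\E[\Tr(M^k)]^2$ produces a product of two single-walk expectations over all pairs, with no disjointness constraint on the indices.

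The key dichotomy is then as follows. Pairs whose walks use pairwise disjoint sets of $X$- and $Y$-variables have joint expectation factorizing by independence as $\Pi(G_1)\Pi(G_2)$, and their contribution reproduces $\E[\Tr(M^k)]^2$ up to lower-order boundary corrections arising from the discrepancy between the product of falling factorials (coming from $\E[\Tr(M^k)]^2$) and the restricted disjoint count (coming from $\E[\Tr(M^k)^2]$). Pairs whose walks share at least one $X$- or $Y$-edge, by contrast, do not cancel and must be bounded directly. For these, I would reapply the parity argument of Proposition \ref{prop1} to $G_1 \cup G_2$: each distinct edge must still be traversed at least twice, and the tree inequality $\#\mathrm{vertices} \leq \#\mathrm{edges} + \#\mathrm{components}$ applied to the combined graph forces the distinct vertex count to drop by at least one (in the $i$-, $m$-, or $j$-label count) compared to the disjoint configuration.

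Under Assumption \ref{ass2}, each such vertex reduction produces a power-of-$n$ saving after normalization by $d^{2k}p^{2k}$ (using $n/(dp) \to \gamma$ and $p/d \to a$), and combining this with the bookkeeping of the boundary corrections from the disjoint-edge family yields the desired bound $\E[\Tr(M^k)^2] - \E[\Tr(M^k)]^2 = O(1)$. The main technical obstacle is the delicate matching between the sub-leading boundary corrections in the disjoint family and the leading contributions from single-edge overlaps: each of these families alone is of strictly higher order than $O(1)$, and the $O(1)$ bound emerges only from their combined resummation at the level of the combined graph $G_1 \cup G_2$. This is analogous to the variance estimates for the Marchenko--Pastur distribution in \cite{baisilver}*{Section 3.1.2}, with the added complication that the Hadamard product produces two simultaneous copies of the tree structure (on the $m$- and $j$-indices) that must be tracked together.
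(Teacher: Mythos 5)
Your overall strategy matches the paper's: expand $\Var[\Tr(M^k)]$ as a double sum over pairs of closed walks on the $(i,m)$- and $(i,j)$-bipartite graphs, observe that the covariance vanishes when the two walk pairs share no random variables, and bound the remaining contribution by counting vertices in the combined graph. However, there are two issues, one cosmetic and one substantive.

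The cosmetic issue is the ``delicate matching'' / ``combined resummation'' framing. You do not need to track boundary corrections between the disjoint family and $\E[\Tr(M^k)]^2$ term by term: the quantity $\Pi(G)=\E[X_{IM}Y_{IJ}X_{I'M'}Y_{I'J'}]-\E[X_{IM}Y_{IJ}]\E[X_{I'M'}Y_{I'J'}]$ is \emph{identically zero} for any pair of configurations that share no $X$- or $Y$-variable, by independence. So the variance equals $\frac{1}{n^2 d^{2k}p^{2k}}\sum_{\text{shared}}\Pi(G)$ directly, with no resummation. Your claim that ``each of these families alone is of strictly higher order than $O(1)$'' is also false — the shared-configuration sums are each already small — and it is precisely this unnecessary complication that obscures the actual bottleneck.

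The substantive gap is that your vertex bound is too weak to give the claimed rate. The tree inequality applied to the connected combined graph $G_{xx'}$ (with $4k$ steps and each edge crossed at least twice, hence at most $2k$ distinct edges) gives $|G_{xx'}|\le 2k+1$, which is the ``drop by one'' you invoke. But this only yields a count of order $n^{s+1}d^{2k-s}p^{2k-s}$, and after dividing by $n^2 d^{2k}p^{2k}$ one gets $\frac{1}{n}\bigl(\frac{n}{dp}\bigr)^s = O(1/n)$, not $O(1/n^2)$. The paper gains one more vertex: if $|G_{xx'}|=2k+1$ then $G_{xx'}$ is a tree and each edge is crossed exactly twice in total; since closed walks on a tree cross each of their own edges an even number of times, both $G_x$ and $G_{x'}$ (themselves subtrees) cross each of their own edges at least twice, so any edge they share is crossed at least four times — contradiction. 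Hence $|G_{xx'}|\le 2k$, giving $n^{s+1}d^{2k-s-1}p^{2k-s-1}$ and the desired $\frac{1}{n^2}\bigl(\frac{n}{dp}\bigr)^{s+1}=O(1/n^2)$. Without this step your argument proves only $O(1/n)$, which would suffice for convergence in probability but not for the Borel–Cantelli/almost-sure part of Theorem \ref{theo} and falls short of the proposition as stated.
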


\begin{proof}

Following the same steps as the previous calculus and using the definition of the variance, we get the following:
\begin{align*}
&\Var \left[ \int x^k d\mu_n \right]\\
&=
\frac{1}{n^2 d^{2k} p^{2k}} \Var \sum_{\substack{1 \leq i_1, ..., i_k \leq n \\ 1 \leq m_1, ..., m_k \leq d \\ 1 \leq j_1, ..., j_k \leq p}} X_{i_1 m_1} X_{i_2 m_1} X_{i_2 m_2} X_{i_3 m_2} \ldots X_{i_k m_k} X_{i_1 m_k} Y_{i_1 j_1} Y_{i_2 j_1} Y_{i_2 j_2} Y_{i_3 j_2} \ldots Y_{i_k j_k} Y_{i_1 j_k} 
\\&=
\frac{1}{n^2 d^{2k} p^{2k}} \sum_{\substack{I,I' \in [n]^k \\ M,M' \in [d]^k \\ J,J' \in [p]^k}} \E \left[ X_{IM} Y_{IJ} X_{I'M'} Y_{I'J'} \right] - \E \left[ X_{IM} Y_{IJ} \right] \E \left[ X_{I'M'} Y_{I'J'} \right]
\\&=
\frac{1}{n^2 d^{2k} p^{2k}} \sum_{G \in \mathcal{G}} \Pi(G) \cdot \# \{(W_x, W_{x'}, W_y, W_{y'}): G \text{ has the shape of } (W_x, W_{x'}, W_y, W_{y'})\}.
\end{align*}

\noindent We transform again the problem into a graph problem with $\mathcal{H}$ being the set of the all the graphs $G$ with 8$k$ steps that have the same shape of the paths $\left( W_x, W_{x'}, W_y, W_{y'} \right)$, and
$$\Pi(G) \vcentcolon = \E \left[ X_{IM} Y_{IJ} X_{I'M'} Y_{I'J'} \right] - \E \left[ X_{IM} Y_{IJ} \right] \E \left[ X_{I'M'} Y_{I'J'} \right].$$ In fact we are going to focus on the graph made by the paths $(W_x, W_{x'})$ because $\#$ vertices $M$ =  $\#$ vertices $J$ and $\#$ vertices $M'$ =  $\#$ vertices $J'$. Suppose that $G_{xx'}$ is the graph made by $(W_x, W_{x'})$ and that $G_x$ and $G_{x'}$ are the graphs made respectively by $W_x$ and $W_{x'}$. First of, notice that $G_{xx'}$ has 4$k$ steps. Furthermore, each edge has to be crossed at least twice, if not, $\Pi(G) = 0$, hence we have a maximum of $2k$ edges. Moreover, it has to be a connected graph, if not:

\begin{multline*}
\Pi(G) 
=
\E \left[ X_{IM} Y_{IJ} X_{I'M'} Y_{I'J'} \right] - \E \left[ X_{IM} Y_{IJ} \right] \E \left[ X_{I'M'} Y_{I'J'} \right]
\\=
\E \left[ X_{IM} Y_{IJ} \right] \E \left[ X_{I'M'} Y_{I'J'} \right] - \E \left[ X_{IM} Y_{IJ} \right] \E \left[ X_{I'M'} Y_{I'J'} \right] = 0.
\end{multline*}

\noindent Since $G_{xx'}$ is a connected graph then $\# \text{ vertices} \leq \# \text{ edges} + 1$, hence $|G_{xx'}| \leq 2k + 1$ with $|G_{xx'}|$ the number of vertices of the graph $G_{xx'}$. But if we have $|G_{xx'}| = 2k + 1$ then $G_{xx'}$ is a tree. And since $W_x$ starts and finishes with the same vertice $i_1$ then $G_x$ is also a tree and every edge of $G_x$ is crossed at least twice by $G_x$. Using the same logic $G_{x'}$ is a tree as well and every edge of $G_{x'}$ is crossed at least twice by $G_{x'}$. Moreover we should have an edge in common between the paths $W_{x}$ and $W_{x'}$, if not, we would have independence and $\Pi(G) = 0$ just like before. Hence we have an edge in common and it has to be crossed at least once by both $G_x$ and $G_{x'}$, so the edge in common is crossed at least 4 times by $G_x$ and $G_{x'}$ which implies that $|G_{xx'}| < 2k + 1$, hence we have that $|G_{xx'}| \leq 2k$. Suppose that $s + 1 = \#$ vertices $i$, and $2k-s-1 = \#$ vertices $M$ + $\#$ vertices $M' = \#$ vertices $J + \#$ vertices $J'$. Then we get the following:
\begin{align*}
\mathrm{Var} \left[ \int x^k d\mu_n \right]
&=
\frac{1}{n^2 d^{2k} p^{2k}} \sum_{G \in \mathcal{H}} \Pi(G) \cdot \# \{(W_x, W_{x'}, W_y, W_{y'}): G \text{ has the shape of } (W_x, W_{x'}, W_y, W_{y'})\}
\\
&\leq
\frac{1}{n^2 d^{2k} p^{2k}} \sum_{G \in \mathcal{H}} \max \{ \Pi(G): G \in \mathcal{H} \} \cdot \frac{n!}{(n-s-1)!} \frac{d!}{(d-2k+s+1)!}\frac{p!}{(p-2k+s+1)!}
\end{align*}
\noindent using the fact that $\# \{(W_x, W_{x'}, W_y, W_{y'}): G \text{ has the shape of } (W_x, W_{x'}, W_y, W_{y'})\} \leq n(n-1) \ldots (n-s) d(d-1) \ldots (d-2k+s+2) p(p-1) \ldots (p-2k+s+2)$, hence:
\begin{align*}
\mathrm{Var} \left[ \int x^k d\mu_n \right]
&\leq
\frac{1}{n^2 d^{2k} p^{2k}} \sum_{G \in \mathcal{H}} \max \{ \Pi(G): G \in \mathcal{H} \} n^{s+1} d^{2k-s-1} p^{2k-s-1}
\\& =
\frac{n^{s+1} d^{2k-s-1} p^{2k-s-1}}{n^2 d^{2k} p^{2k}} \max \{ \Pi(G): G \in \mathcal{H} \} \sum_{G \in \mathcal{H}} 1
=
\frac{1}{n^2} \cdot \frac{n^{s+1}}{d^{s+1} p^{s+1}} \cdot C = O\left(\frac{1}{n^2}\right)
\end{align*}

\noindent with $C = \max \{ \Pi(G): G \in \mathcal{G} \} \sum_{G \in \mathcal{H}} 1 < \infty$.

\end{proof}

We now have convergence of the expected moments as well as concentration under the form of a bound of the variance of the moments. However, these results hold under moment assumptions on the entries. We now work to remove the moment assumptions by comparing our initial distribution to a truncated one. To do so, we first introduce the L\'evy distance on cumulative distribution function, 
\[
\mathcal{L}(F,G) = \inf\{\varepsilon>0,\,G(x-\varepsilon)-\varepsilon\leqslant F(x)\leqslant G(x+\varepsilon)+\varepsilon,\,\forall x\in\mathbb{R}\}
\]
and we recall that convergence in this metric implies convergence in distribution.
\begin{prop}\label{prop3}

Suppose that we have the same assumptions as Theorem \ref{theo}. We define for $c>0$,
\[
\tilde{M} = \frac{1}{d} \tilde{X} \tilde{X}^\top \odot \frac{1}{p} YY^\top\quad\text{with}\quad\tilde{X}_{ij} = X_{ij}\mathds{1}_{|X_{ij}| \leq c}-\E\left[X_{ij}\mathds{1}_{ |X_{ij}|\leq c}\right].
\] If we consider 
\[
F^M(x) = \frac{1}{n}\#\{i,\,\lambda_i\leqslant x\}\quad\text{and}\quad F^{\tilde{M}}(x) = \frac{1}{n}\#\{i,\tilde{\lambda}_i\leqslant x\}
\] where $\lambda_i$, $\tilde{\lambda}_i$ are the eigenvalues of $M$ and $\tilde{M}$ respectively, then:
\[
\lim_{c\to\infty} \lim_{n\to\infty} \E \left[ \mathcal{L}\left( F^M, F^{\tilde{M}} \right)^3 \right] = 0
\]

\end{prop}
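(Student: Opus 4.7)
The plan is to combine a Lévy–Frobenius inequality for Hermitian matrices,
\begin{equation*}
\mathcal{L}(F^A, F^B)^3 \leq \frac{1}{n}\Tr\left((A-B)^2\right),
\end{equation*}
valid for any $n \times n$ Hermitian $A, B$ (a consequence of the Hoffman–Wielandt inequality; see, e.g., Corollary A.41 of \cite{baisilver}), with a direct moment computation of the right-hand side. Taking expectation with $A = M$, $B = \tilde M$ reduces the proposition to showing that $\frac{1}{n}\E[\Tr((M-\tilde M)^2)] \to 0$ as $c \to \infty$ after $n \to \infty$.

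Decompose $X = \tilde X + R$ with $R_{ij} = X_{ij}\one_{|X_{ij}| > c} - \E[X_{ij}\one_{|X_{ij}| > c}]$. Since $\E[X_{ij}] = 0$, the $R_{ij}$ are centered and independent, with $\E[R_{ij}^2] \leq \varepsilon(c) := \E[X_{11}^2 \one_{|X_{11}|>c}] \to 0$ as $c \to \infty$ by dominated convergence. Because $M - \tilde M$ is symmetric with entries $(M - \tilde M)_{ij} = \frac{1}{dp}(XX^\top - \tilde X \tilde X^\top)_{ij}(YY^\top)_{ij}$, independence of $X$ and $Y$ yields
\begin{equation*}
\E[\Tr((M - \tilde M)^2)] = \frac{1}{d^2 p^2}\sum_{i,j=1}^n \E[(XX^\top - \tilde X \tilde X^\top)_{ij}^2] \cdot \E[(YY^\top)_{ij}^2],
\end{equation*}
and a direct computation gives $\E[(YY^\top)_{ij}^2] = p\sigma_y^4$ for $i \neq j$ and $\E[(YY^\top)_{ii}^2] = O(p^2)$.

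I would then bound the $X$-factor case by case. For $i \neq j$, expand $XX^\top - \tilde X\tilde X^\top = \tilde X R^\top + R \tilde X^\top + R R^\top$; independence across rows of the three bilinear terms (e.g.\ $\E[(\tilde X R^\top)_{ij}^2] = \sum_m \E[\tilde X_{im}^2]\E[R_{jm}^2] \leq d\sigma_x^2 \varepsilon(c)$) yields $\E[(XX^\top - \tilde X \tilde X^\top)_{ij}^2] \leq C d \varepsilon(c)$. For $i = j$, use the identity $(XX^\top - \tilde X \tilde X^\top)_{ii} = \sum_m (R_{im}^2 + 2 R_{im} \tilde X_{im})$; even though $\tilde X_{im}$ and $R_{im}$ are not independent, the deterministic relation $\tilde X_{im} R_{im} = -\mu \tilde X_{im} + \mu R_{im} + \mu^2$, where $\mu := \E[X_{11}\one_{|X_{11}|>c}]$ (valid because the truncation supports are disjoint), reduces the variance estimate to sums of centered independent variables and gives $\E[((XX^\top - \tilde X\tilde X^\top)_{ii})^2] = O(d^2 \varepsilon(c)^2 + d\, g(c))$ with $g(c) \to 0$ as $c \to \infty$.

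Under Assumption \ref{ass2}, the off-diagonal sum then contributes $O(\frac{n}{dp}\varepsilon(c)) = O(\varepsilon(c))$ to $\frac{1}{n}\E[\Tr((M - \tilde M)^2)]$, while the diagonal sum contributes $O(\varepsilon(c)^2 + g(c)/d) \to 0$, whence the bound is $O(\varepsilon(c))$ and vanishes as $c \to \infty$. The main delicate point, and the reason $n \to \infty$ must be taken first, is the diagonal case: the $d^2\varepsilon(c)^2$ term in $\E[((XX^\top - \tilde X\tilde X^\top)_{ii})^2]$ would be dangerous on its own, but it is precisely absorbed by the $d^2$ factor in the denominator coming from the scaling of $M$; carefully tracking this cancellation (and verifying that the $\tilde X$–$R$ dependence on the diagonal behaves through the identity above) is where the bookkeeping is concentrated.
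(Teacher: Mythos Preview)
Your proposal is correct and follows essentially the same route as the paper: apply the Hoffman--Wielandt/L\'evy bound (Corollary~A.41 in \cite{baisilver}), factor the $X$ and $Y$ contributions by independence, split diagonal versus off-diagonal, and control the $X$-difference entrywise. The only cosmetic difference is that you organise the algebra via the decomposition $X=\tilde X+R$ and the disjoint-support identity $\tilde X_{im}R_{im}=-\mu\tilde X_{im}+\mu R_{im}+\mu^2$ for the diagonal, whereas the paper expands $X_{ik}X_{jk}-\tilde X_{ik}\tilde X_{jk}$ directly and computes the diagonal limit $\sigma_y^4\bigl(\E[X_{11}^2]-\E[\tilde X_{11}^2]\bigr)^2$ explicitly; both computations are equivalent and yield the same $O(\varepsilon(c))$ bound.
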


\begin{proof}

By \cite{baisilver}*{Corollary A.41} we have:
\begin{align*}
\E \left[ \mathcal{L} \left( F^M, F^{\tilde{M}} \right)^3 \right] 
&\leq 
\frac{1}{n} \E \left[ \text{Tr} \left( \left( M - \tilde{M} \right) \left( M - \tilde{M} \right)^\top \right) \right]
\\&=
\frac{1}{n} \sum_{\substack{1 \leq i,j \leq n \\ i \neq j}} \E \left[ \left( m_{ij} - \tilde{m}_{ij} \right)^2 \right] + \frac{1}{n} \sum_{i=1}^n \E \left[ \left( m_{ii} - \tilde{m}_{ii} \right)^2 \right]
\end{align*}
\noindent we begin by calculating the part where $i \neq j$. Using the definition of $M$ and $\tilde{M}$ we get the following:

\[
\frac{1}{n} \sum_{\substack{1 \leq i,j \leq n \\ i \neq j}} \E \left[ \left( m_{ij} - \tilde{m}_{ij} \right)^2 \right]
=
\frac{1}{n} \sum_{\substack{1 \leq i,j \leq n \\ i \neq j}} \E \left[ \left( \frac{1}{d} \sum_{k=1}^{d} X_{ik}X_{jk} - \tilde{X}_{ik} \tilde{X}_{jk} \right)^2 \right] \E \left[ \left( \frac{1}{p} \sum_{l=1}^{p} Y_{il}Y_{jl} \right)^2 \right]
\]

\noindent and since we have by independence of the entries of $Y$,
\[
\E \left[ \left( \frac{1}{p} \sum_{l=1}^{p} Y_{il} Y_{jl} \right)^2 \right]
=
\frac{1}{p^2} \sum_{1 \leq l_1, l_2 \leq p} \E \left[ Y_{il_1} Y_{jl_1} Y_{il_2} Y_{jl_2} \right]
=
\frac{1}{p^2} \sum_{1 \leq l_1 \leq p} \E \left[ Y_{il_1}^2 \right] \E \left[ Y_{jl_1}^2 \right] = \frac{1}{p} {\sigma_y}^4.
\]
This gives
\[
\frac{1}{n} \sum_{\substack{1 \leq i,j \leq n \\ i \neq j}} \E \left[ \left( m_{ij} - \tilde{m}_{ij} \right)^2 \right]
=
\frac{{\sigma_y}^4}{npd^2} \sum_{\substack{1 \leq i,j \leq n \\ i \neq j}} \E \left[ \left( \sum_{k=1}^{d} X_{ik} X_{jk} - \tilde{X}_{ik} \tilde{X}_{jk} \right)^2 \right]
\]

\noindent which has the same form as $\left( \sum_{k} a_k \right)^2 = \sum_{k} a_k^2 + 2 \sum_{k_1 < k_2} a_{k_1} a_{k_2}$, however we have that for  $k_1<k_2$:
\[
\E \left[ \left( X_{ik_1} X_{jk_1} - \tilde{X}_{ik_1} \tilde{X}_{jk_1} \right) \left( X_{ik_2} X_{jk_2} - \tilde{X}_{ik_2} \tilde{X}_{jk_2} \right) \right]
=
\left( \E[X_{11}] \right)^4 - 2 \left( \E \left[ X_{11} \right] \right)^2 \left( \E \left[ \tilde{X}_{11} \right] \right)^2 + \left( \E \left[ \tilde{X}_{11} \right] \right)^4
=
0.
\]
\noindent And using the inequality $\left( a + b \right)^2 \leq 2 \left( a^2 + b^2 \right)$ we get:
\begin{align*}
\frac{1}{n} \sum_{\substack{1 \leq i,j \leq n \\ i \neq j}} \E \left[ \left( m_{ij} - \tilde{m}_{ij} \right)^2 \right]
&=
\frac{{\sigma_y}^4}{npd^2} \sum_{\substack{1 \leq i,j \leq n \\ i \neq j}} \sum_{k=1}^{d} \E \left[ \left( X_{ik} \left( X_{jk} - \tilde{X}_{jk} \right) + \tilde{X}_{jk} \left( X_{ik} - \tilde{X}_{ik} \right) \right)^2 \right]
\\ &\leq
\frac{{4 \sigma_y}^4}{npd^2} \sum_{\substack{1 \leq i,j \leq n \\ i \neq j}} \sum_{k=1}^{d} \max \left\{ \E \left[ X_{ik}^2 \right], \E \left[ \tilde{X}_{ik}^2 \right] \right\} \E \left[ \left( X_{jk} - \tilde{X}_{jk} \right)^2 \right]
\\ &\xrightarrow[n \shortrightarrow \infty]{}
4 \gamma \sigma_y^4 \max \left\{ \E \left[ X_{ik}^2 \right], \E \left[ \tilde{X}_{ik}^2 \right] \right\} \E \left[ \left( X_{jk} - \tilde{X}_{jk} \right)^2 \right] \xrightarrow[c \shortrightarrow \infty]{} 0
\end{align*}
where we use the fact that 
\[\E \left[ \left( X_{jk} - \tilde{X}_{jk} \right)^2 \right] = \E \left[ \left( X_{jk} - {X}_{jk} \mathds{1}_{ |{X}_{jk}| < c} + \E \left[ {X}_{jk} \mathds{1}_{|{X}_{jk}| < c } \right] \right)^2 \right] \xrightarrow[c \shortrightarrow \infty]{} 0.
\]

\noindent Now we want to calculate the part where $i = j$:
\begin{multline*}
\frac{1}{n} \sum_{i=1}^n \E \left[ \left( m_{ii} - \tilde{m}_{ii} \right)^2 \right]
=
\frac{1}{n} \sum_{i=1}^{n} \E \left[ \left( \frac{1}{d} \sum_{k=1}^{d} X_{ik}^2 - \tilde{X}_{ik}^2 \right)^2 \right] \E \left[ \left( \frac{1}{p} \sum_{l=1}^{p} Y_{il}^2 \right)^2 \right]
\\=
\frac{1}{n d^2 p^2} \sum_{i=1}^{n} \sum_{1 \leq k_1, k_2 \leq d} \E \left[ X_{ik_1}^2 X_{ik_2}^2 - X_{ik_1}^2 \tilde{X}_{ik_2}^2 - \tilde{X}_{ik_1}^2 X_{ik_2}^2 + \tilde{X}_{ik_1}^2 \tilde{X}_{ik_2}^2 \right] \E \left[ \sum_{1 \leq l_1, l_2 \leq p} Y_{il_1}^2 Y_{il_2}^2 \right]
\end{multline*}
\noindent we have $d^2 - d$ cases where $k_1 \neq k_2$ and $d$ cases where $k_1 = k_2$, similarly we have $p^2 - p$ cases where $l_1 \neq l_2$ and $p$ cases where $l_1 = l_2$, thus
\begin{multline*}
\frac{1}{n} \sum_{i=1}^n \E \left[ \left( m_{ii} - \tilde{m}_{ii} \right)^2 \right]
=
\frac{ \left(p^2 - p \right) \left( \E \left[ Y_{11}^2 \right] \right)^2 + p \E \left[ Y_{11}^4 \right]}{p^2}\times
\\
\times\frac{\left( d^2 - d \right) \left( \left(\E \left[ X_{11}^2 \right] \right)^2 - 2 \E \left[ X_{11}^2 \right] \E \left[ \tilde{X}_{11}^2 \right] + \left( \E \left[ \tilde{X}_{11}^2 \right] \right)^2 \right) + d \left( \E \left[ X_{11}^4 \right] - 2 \E \left[ X_{11}^2 \tilde{X}_{11}^2 \right] + \E \left[ \tilde{X}_{11}^4 \right] \right)}{d^2}
\\
\xrightarrow[n \shortrightarrow \infty]{} \sigma_y^4 \left( \left(\E \left[ X_{11}^2 \right] \right)^2 - 2 \E \left[ X_{11}^2 \right] \E \left[ \tilde{X}_{11}^2 \right] + \left( \E \left[ \tilde{X}_{11}^2 \right] \right)^2 \right)
\end{multline*}
and since we have
\[
\E \left[ \tilde{X}_{11}^2 \right]
=
\E \left[ \left(X_{11} \mathds{1}_{|X_{11}| < c} - \E \left[ X_{11} \mathds{1}_{ |X_{11}| < c } \right] \right)^2 \right]
\xrightarrow[c \shortrightarrow \infty]{}
\E \left[ \left(X_{11} - \E \left[ X_{11} \right] \right)^2 \right]
=
\E \left[ X_{11}^2 \right],
\]
we finish by
\[
\frac{1}{n} \sum_{i=1}^n \E \left[ \left( m_{ii} - \tilde{m}_{ii} \right)^2 \right]
\xrightarrow[n \shortrightarrow \infty]{}
\sigma_y^4 \left( \left(\E \left[ X_{11}^2 \right] \right)^2 - 2 \E \left[ X_{11}^2 \right] \E \left[ \tilde{X}_{11}^2 \right] + \left( \E \left[ \tilde{X}_{11}^2 \right] \right)^2 \right)
\xrightarrow[c \shortrightarrow \infty]{}
0.
\]

\end{proof}

We can now finish the proof of Theorem \ref{theo} by combining the previous results.

\begin{proof}[Proof of Theorem \ref{theo}]
We first note that from convergence of expected moments from Proposition \ref{prop1} and the summable bound on the variance of the moments from Proposition \ref{prop2}, we obtain the almost sure weak convergence of the distribution to Marchenko--Pastur (see \cite{baisilver} for instance) which gives the second part of the theorem.

For the first part, we use the centering and truncating from Proposition \ref{prop3}. We first define $\hat{M} = \frac{1}{d} \tilde{X} \tilde{X}^\top \odot \frac{1}{p} \tilde{Y} \tilde{Y}^\top$, with $\tilde{Y}_{ij} = Y_{ij} \mathds{1}_{|Y_{ij}| \leq c}-\E[Y_{ij}\mathds{1}_{ |Y_{ij}|\leq c}]$. Since $X$ and $Y$ are playing the same role in the definition of $M$, by doing the same steps as Proposition \ref{prop3} we have that $ \lim_{c\to\infty} \lim_{n\to\infty} \E \left[ \mathcal{L}\left( F^{\hat{M}},F^{\tilde{M}} \right)^3 \right] = 0$, hence by the two steps we get that
\[
\lim_{c\to\infty} \lim_{n\to\infty} \E \left[ \mathcal{L} \left( F^{\hat{M}},F^{M} \right)^3 \right]
=
0
\]
\noindent Since the entries of $\hat{M}$ are bounded by definition and centered, we can use Propositions \ref{prop1} and \ref{prop2} to see that the empirical eigenvalue distribution of $\hat{M}$ converges weakly almost surely to the Marchenko--Pastur distribution of shape $\gamma$. This $L^3$ bound of the L\'evy distance gives that it converges in probability which gives our convergence in distribution in probability of the empirical eigenvalue distribution.
\end{proof}

%     \centering
%     \includegraphics[width=1\textwidth]{images/40000 normal law.jpg}
%     \caption{Distribution of the eigenvalues for the matrix $M = \frac{1}{d} XX^\top \odot \frac{1}{p} YY^\top$ for $X_{ij}$ and $Y_{ij}$ being random variables of the normal law and $n = 40000$, $d = 200$, $p = 400$.}
% \end{figure}

\bibliographystyle{abbrv}
\bibliography{bibliostage.bib}
\end{document}